\documentclass[12pt]{amsart}
\usepackage{amsmath, amsthm, amssymb,cite}
\usepackage{fullpage}
\usepackage{color}
\usepackage[colorlinks=true]{hyperref}

\newtheorem{theorem}{Theorem}[section]
\newtheorem{lemma}{Lemma}

\numberwithin{lemma}{section}

\allowdisplaybreaks

\numberwithin{equation}{section}

\makeatletter
\renewcommand{\subsection}{%
  \@startsection{subsection}{2}{\z@}%
  {1.5ex plus .5ex minus .2ex}%
  {.8ex plus .2ex}%
  {\normalfont\bfseries}%
}
\makeatother

\author{Jianxiang Liu}
\address{Department of Mathematics \& Statistics, McMaster University}
\curraddr{}
\email{liu2238@mcmaster.ca}

\keywords{Hessian quotient equations, interior Hessian estimates,
compactness}
\subjclass[2020]{Primary 35J60, 35B45}
\begin{document}

\title{Hessian Estimates for Convex Solutions of Hessian Quotient
Equations via Compactness}

\begin{abstract}
We give a compactness proof of interior Hessian estimates
for convex solutions of
$\sigma_n(D^2u)/\sigma_{n-2}(D^2u)=1$ and
$\sigma_n(D^2u)/\sigma_{n-1}(D^2u)=f(x,u,Du)>0$,
with $1/f$ concave in the gradient variable in the latter case,
revisiting results of Lu~\cite{Lu2025} and Fung~\cite{Fung2026}.
\end{abstract}

\maketitle

\tableofcontents

\section{Introduction}
\label{sec:1}

In this note, we give alternative compactness proofs of
interior a priori Hessian estimates previously established
by Lu~\cite{Lu2025} and Fung~\cite{Fung2026} for convex
solutions of two classes of Hessian quotient equations.
Throughout, $\sigma_k(D^2u)$ denotes the $k$-th elementary
symmetric polynomial of the eigenvalues of the Hessian $D^2u$,
with $\sigma_0=1$. We write
\begin{align}
\Gamma_k
:=\{\lambda\in\mathbb R^n:\sigma_j(\lambda)>0
\text{ for }j=1,\ldots,k\},
\end{align}
and write $A\in\Gamma_k$ when the eigenvalues of a symmetric
matrix $A$ belong to $\Gamma_k$.
We establish the following estimates.

\begin{theorem}\label{thm:interior-hessian}
Let $n\ge2$ and $R>0$, and suppose that $u$ is a smooth
convex solution of
\begin{align}\label{eq:mainpde}
\frac{\sigma_n(D^2u)}{\sigma_{n-2}(D^2u)}=1
\qquad\text{in }B_{2R}(0)\subset\mathbb R^n.
\end{align}
Then
\begin{align}
|D^2u(0)|
\le g\left(\frac{\|Du\|_{L^\infty(B_R(0))}}{R},n\right),
\end{align}
where $g(t,n)$ is a universal positive finite-valued function,
nondecreasing in $t$.
Consequently, by convexity,
\begin{align}
|D^2u(0)|
\le g\left(\frac{2\|u\|_{L^\infty(B_{2R}(0))}}{R^2},n\right).
\end{align}
\end{theorem}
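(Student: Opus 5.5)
By scaling we may take $R=1$. Indeed, if $u$ solves \eqref{eq:mainpde} in $B_{2R}$, then $v(x)=R^{-2}u(Rx)$ solves it in $B_2$ with $D^2v(x)=D^2u(Rx)$ and $\|Dv\|_{L^\infty(B_1)}=\|Du\|_{L^\infty(B_R)}/R$, because the equation is homogeneous of degree $2$ in $D^2u$.

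I would argue by contradiction using compactness. Suppose there are smooth convex solutions $u_k$ in $B_2$ with $\|Du_k\|_{L^\infty(B_1)}\le t$ but $|D^2u_k(0)|\to\infty$. Subtracting constants, the $u_k$ are uniformly Lipschitz and convex on $B_1$. A subsequence therefore converges locally uniformly to a convex $u_\infty$. Since $F(D^2u)=\sigma_n/\sigma_{n-2}$ is concave and elliptic on the convex cone, viscosity solutions are stable, and $u_\infty$ is a convex viscosity solution of \eqref{eq:mainpde} in $B_1$. To reach a contradiction, I need to show two things: $u_\infty$ is smooth near $0$, and smoothness of the limit forces uniform $C^{1,1}$ bounds on $u_k$ near $0$ for large $k$.

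The first key step is the regularity of the limit $u_\infty$. The equation controls $\sigma_{n}/\sigma_{n-2}$. For convex $\lambda$, this quantity is comparable to a harmonic-type mean of the products $\lambda_i\lambda_j$. In particular $\sigma_n=\sigma_{n-2}$ gives $\sum_{i<j}1/(\lambda_i\lambda_j)=1$, so every $\lambda_i\lambda_j\ge 1$. Hence at most one eigenvalue can be smaller than $1$, and the Hessian is strictly positive in a quantitative pairwise sense. I would use this to exclude line segments in the graph of $u_\infty$, i.e.\ to show strict convexity. The argument would be Pogorelov/Caffarelli-type: if $u_\infty$ agreed with a supporting affine function along a segment through an interior point, then a barrier built from the two-dimensional structure (the product $\lambda_i\lambda_j\ge 1$ in the plane of the segment and one transverse direction, behaving like a Monge--Ampère equation in two variables) would give a contradiction. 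Once $u_\infty$ is strictly convex, its sections $S_h=\{u_\infty<\ell+h\}$ are compactly contained. On them I would apply a Pogorelov-type interior $C^{1,1}$ estimate with cutoff $(\ell+h-u)$. Such an estimate is available because $\log(\sigma_n/\sigma_{n-2})$ is concave and the quotient satisfies the standard Pogorelov computation, as in known results for $\sigma_k/\sigma_l$ Pogorelov estimates. This gives $D^2u_\infty$ bounded near $0$. Evans--Krylov then yields smoothness.

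The second key step transfers this back to the $u_k$. Strict convexity of $u_\infty$ at $0$ gives a section $S_h(0)$ of $u_\infty$ compactly inside $B_1$. By uniform convergence, the corresponding sections of $u_k$ (with the supporting planes at $0$, which converge) are also compactly inside $B_1$ for large $k$, and they are comparable in size. The Pogorelov estimate applied to $u_k$ on these sections bounds $D^2u_k(0)$ uniformly in terms of $h$, the diameter of the section, and $\|Du_k\|_{L^\infty}$. This contradicts $|D^2u_k(0)|\to\infty$. Monotonicity of $g$ in $t$ is then automatic: define $g(t,n)$ as the supremum over all admissible $u$, which is finite by the above. The final statement follows from the convexity bound $|Du(x)|\le \operatorname{osc}_{B_{2R}}u/R\le 2\|u\|_{L^\infty(B_{2R})}/R$ for $x\in B_R$.

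The main obstacle is proving strict convexity of the limit, i.e.\ ruling out flat directions. Since $\lambda_i\lambda_j\ge1$ holds for all pairs, a line segment in the graph would force one eigenvalue to $0$ along it and all the others to be large. I would try to exploit this by restricting to a two-plane containing the segment, where $u_\infty$ is a viscosity supersolution of $\det D^2 w\ge c$. Alexandrov-type arguments for two-dimensional Monge--Ampère (Heinz/Alexandrov: in dimension two, solutions with $\det\ge c$ are strictly convex) would then exclude the segment.
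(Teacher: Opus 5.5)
There is a genuine gap. It lies in the step you treat as routine, not in the one you flag as the main obstacle.

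\textbf{Strict convexity is the easy part.} Your justification via ``the plane of the segment and one transverse direction'' is inaccurate, since eigenvectors of $D^2u_k$ need not lie in the plane. Cauchy interlacing repairs this. If $\mu_1\le\mu_2$ are the eigenvalues of the compression of $D^2u_k$ to a $2$-plane $P$, then $\mu_1\ge\lambda_1$ and $\mu_2\ge\lambda_2$. Hence $\det D^2(u_k|_P)\ge\lambda_1\lambda_2\ge 1$. Weak stability of Monge--Amp\`ere measures, together with Alexandrov's two-dimensional theorem, then rules out segments in the graph of $u_\infty$. That part works.

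\textbf{The Pogorelov estimate is the gap.} You assert a Pogorelov estimate on sections for $\sigma_n/\sigma_{n-2}$ ``because $\log(\sigma_n/\sigma_{n-2})$ is concave and the quotient satisfies the standard Pogorelov computation.'' It does not. Take $F=\log\sigma_n-\log\sigma_{n-2}$ with $\lambda_1=\lambda_{\max}$, and write $\sigma_m(\lambda|1)$ for $\sigma_m$ with $\lambda_1$ omitted. Then $F_1=\sigma_{n-2}(\lambda|1)/(\lambda_1\sigma_{n-2}(\lambda))$, $\sum_iF_i\lambda_i=2$, and $\sum_iF_i\lambda_i^2=(n-1)\sigma_{n-1}/\sigma_{n-2}$.

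For $n\ge3$ the regime $\lambda_1\to\infty$ with $(\lambda_2,\dots,\lambda_n)$ bounded is admissible; the equation just forces $\sigma_2$ of their reciprocals to tend to $1$. In this regime $F_1\sim\lambda_1^{-2}$, so $F_1\lambda_1\to0$, and $\sum_iF_i\lambda_i^2$ stays bounded.

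In Pogorelov's Monge--Amp\`ere argument, the term $\sum_iF_i\lambda_i^2$, produced by $\tfrac12|Du|^2$ in the test function, grows like $\lambda_{\max}$. That growth is what absorbs the bad term $2\beta/(\ell+h-u)$ coming from $\beta\log(\ell+h-u)$. Here nothing grows with $\lambda_{\max}$, and concavity of $F$ supplies no substitute. This super-degeneracy of the operator in the blow-up direction is precisely the difficulty of the theorem. If a Pogorelov estimate for this quotient were available off the shelf, your short strict-convexity argument would make the results of Lu and Fung immediate. As it stands, both your transfer step to $u_k$ and your claimed regularity of $u_\infty$ rest on an unproved estimate that carries the whole content.

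\textbf{How the paper avoids this.} It never uses a Pogorelov estimate. It Legendre-transforms $u+\frac a2|x|^2$ with $a=\sqrt{n(n-1)/2}$. This turns the equation into the uniformly elliptic convex equation $G(D^2w)=-q(I-aD^2w)=-1/(n-1)$. The blow-up of $D^2u_k(0)$ then becomes a vanishing eigenvalue of $D^2w_\infty(0)$, under uniform Evans--Krylov control. The strong maximum principle applied to $(w_\infty)_{11}$ spreads $\det D^2w_\infty=0$ over all of $N$. The area formula then contradicts $B_\rho\subset Dw_\infty(N)$.
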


For the quotient $\sigma_n/\sigma_{n-1}$, the compactness
argument also applies to a positive right-hand side
$f(x,u,Du)$, provided that $1/f$ is concave in the gradient
variable.

\begin{theorem}\label{thm:variable-rhs}
Let $n\ge2$ and $R>0$, and suppose that $u$ is a smooth
convex solution of
\begin{align}
\frac{\sigma_n(D^2u)}{\sigma_{n-1}(D^2u)}
=f(x,u,Du)
\qquad\text{in }B_R(0)\subset\mathbb R^n.
\end{align}
Suppose that
\begin{align}
\|u\|_{L^\infty(B_R(0))}\le M,
\qquad
\|Du\|_{L^\infty(B_R(0))}\le L,
\end{align}
and set
\begin{align}
K=\overline{B_R(0)}\times[-M,M]\times\overline{B_L(0)}.
\end{align}
Assume that $f=f(x,z,p)\in C^2(\Omega)$ for an open set
$\Omega\supset K$, with
\begin{align}
\inf_K f>0,
\qquad
D_{pp}^2\left(\frac1f\right)\le0
\qquad\text{on }K.
\end{align}
Then
\begin{align}
|D^2u(0)|
\le C\left(n,R,M,L,\|f\|_{C^2(K)},\inf_K f\right).
\end{align}
\end{theorem}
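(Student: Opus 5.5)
We argue by contradiction and compactness. After rescaling $x\mapsto x/R$ we may take $R=1$; the dependence on $R$ is absorbed into the constant. Suppose the estimate fails. Then there are smooth convex solutions $u_k$ on $B_1$ with right-hand sides $f_k$ such that $\|u_k\|_{L^\infty}\le M$, $\|Du_k\|_{L^\infty}\le L$, $\|f_k\|_{C^2(K)}\le \Lambda$, $\inf_K f_k\ge \lambda>0$, $D^2_{pp}(1/f_k)\le 0$ on $K$, but $|D^2u_k(0)|\to\infty$.

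\textbf{Step 1: pass to limits.} By Arzel\`a--Ascoli, $f_k\to f$ in $C^{1,\alpha}(K)$, and the limit keeps the bounds $\inf_K f\ge\lambda$ and $D^2_{pp}(1/f)\le0$. The $u_k$ are uniformly Lipschitz, so a subsequence converges uniformly on $B_1$ to a convex $u$ with $|Du|\le L$. By stability of viscosity solutions, $u$ is a viscosity solution of $\sigma_n/\sigma_{n-1}(D^2u)=f(x,u,Du)$.

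\textbf{Step 2: smooth blow-up.} Rewrite the equation as
\begin{align}
F(D^2u):=-\frac{\sigma_{n-1}}{\sigma_n}(D^2u)=\sum_i\Big(-\frac1{\lambda_i}\Big)=-\frac1{f(x,u,Du)}.
\end{align}
Here $F$ is concave and elliptic on positive definite matrices. By the hypothesis, the right-hand side $-1/f$ is convex in $p$.

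We then need to know that the limit $u$ is $C^{1,1}$, or better smooth, near $0$, and that the $u_k$ converge to it locally in $C^{2}$. This upgrades the $C^0$ convergence to uniform $C^{2,\alpha}$ bounds near $0$ and contradicts $|D^2u_k(0)|\to\infty$.

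The mechanism I would use is a quantitative rigidity: a uniform interior estimate that holds whenever the solution is close to a strictly convex smooth function. Once $u$ is known to be smooth and strictly convex near $0$, the standard theory applies. Evans--Krylov and Caffarelli's perturbation theory for concave equations, with the convexity of $p\mapsto -1/f$ handled as in Fung's approach, give $C^{2,\alpha}$ bounds for the $u_k$ near $0$ once they are $C^0$-close to $u$.

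\textbf{Step 3: regularity of the limit.} This is the main obstacle. The equation degenerates only where some eigenvalue $\lambda_i\to 0$, forcing $\lambda_i\lesssim f$. Note that $\sigma_n/\sigma_{n-1}=(\sum 1/\lambda_i)^{-1}\le \min_i\lambda_i$. So the condition $f\ge\lambda$ forces every eigenvalue to be at least $\lambda$: the solutions are uniformly convex, $D^2u_k\ge \lambda I$.

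Hence $u$ is uniformly convex and has no line segments in its graph. By Caffarelli-type arguments for uniformly convex viscosity solutions, sections of $u$ are then well-behaved, and $u$ is strictly convex.

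What remains is to bound $D^2u$ from above. I would try to show that $u$ is $C^{1,1}$ via a Pogorelov-type estimate on sections, $S_h=\{u<\ell+h\}$, comparing with smooth solutions of the Dirichlet problem (solvable by Caffarelli--Nirenberg--Spruck). The concavity of $1/f$ in $p$ is what allows a maximum principle for $w=\log\Delta u+\ldots$ to close: differentiating twice, the term $-D_{pp}(1/f)\,u_{ij}u_{ij}\ge0$ carries the good sign. Once $u\in C^{1,1}$, the uniform ellipticity given by $\lambda\le D^2u\le C$ and Evans--Krylov make $u$ smooth. Step 2 then closes the contradiction.

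If the Pogorelov bound on sections proves hard, the fallback is Savin's small-perturbation theorem. Blow $u$ up at a point of differentiability, where the second-order Alexandrov expansion holds by convexity, and obtain local smoothness around almost every point. Then use a covering or continuity argument to reach $0$.
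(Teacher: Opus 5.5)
There is a genuine gap. Your Step 3, regularity of the limit at the prescribed point $0$, is the whole content of the theorem, and the proposal does not supply it.

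Some pieces are right. The identity $\sigma_n/\sigma_{n-1}(D^2u_k)=(\sum_i\lambda_i^{-1})^{-1}\le\min_i\lambda_i$ gives $D^2u_k\ge\lambda I$, and the paper uses exactly this. A Savin-type small perturbation theorem could plausibly reduce the contradiction to second-order differentiability of the limit $u$ at $0$. Evans--Krylov or Caffarelli's perturbation theory cannot do this, since they presuppose the Hessian bound you are proving.

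You do, however, have the degeneracy backwards. The linearization of $-\sum_i\lambda_i^{-1}$ has eigenvalues $\lambda_i^{-2}$, so the equation degenerates as $\lambda_i\to\infty$. That is precisely the scenario to be excluded, and uniform convexity does nothing about it.

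The Pogorelov route is only announced. The good sign of the $D^2_{pp}(1/f)$ term is not the difficulty. The difficulty is absorbing third-derivative terms against coefficients $\lambda_i^{-2}$, and you give no computation. Moreover, such an estimate applied to $u_k$ on its own sections at $0$ (compactly contained in $B_1$ by uniform convexity) would prove the theorem directly, so the compactness would play no role.

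The Savin fallback fails outright. Smoothness near almost every point gives an open dense regular set, but no covering or continuity argument carries regularity to a prescribed point. The singular set of a limit can be a closed null set through $0$; Pogorelov's examples are smooth off a line. Excluding this needs a mechanism specific to the equation, and you do not identify one.

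The missing idea is to propagate degeneracy from one point to an open set, where it can be refuted. The paper does this after a Legendre transform.

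\begin{itemize}
\item Since $\sigma_{n-1}/\sigma_n(D^2u)=\operatorname{tr}(D^2u)^{-1}$, the transforms $w_k$ of the normalized $u_k$ satisfy the Poisson equation $\Delta w_k=h_k\circ Q_k$. Here $h_k=1/f_k$, $Q_k(y)$ is the triple $(x,u_k,Du_k)$ expressed in $y$, and $0<D^2w_k\le\lambda^{-1}I$.
\item The right-hand side is therefore uniformly Lipschitz. Schauder theory gives $C^{2,\alpha}_{\mathrm{loc}}$ compactness with no Hessian bound on $u_k$, and $|D^2u_k(0)|\to\infty$ becomes $\lambda_{\min}(D^2w_\infty(0))=0$.
\item The gradient variable is $p=y+Du_k(0)$, which is affine in $y$. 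Differentiate the equation twice along a unit eigenvector $e$ for $\mu_k:=\lambda_{\min}(D^2w_k)$. The only term not controlled by $C|D(w_k)_{ee}|+C\mu_k$ is $D^2_{pp}h_k[e,e]$, which is $\le0$ exactly by the hypothesis on $1/f$. This is where that hypothesis is used.
\item One obtains $-\Delta\mu_k+C_1|D\mu_k|+C_2\mu_k\ge0$ in the viscosity sense, uniformly in $k$ and stable under the limit.
\item The strong minimum principle of Bardi--Da Lio then gives $\lambda_{\min}(D^2w_\infty)\equiv0$ on a connected neighbourhood $N$ of $\partial v_\infty(0)$, where $v_\infty$ is the limit of the normalized $u_k$.
\item By the area formula, $|Dw_\infty(N)|=0$. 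But $Dw_\infty(N)\supset B_\rho$, because $\partial v_\infty(x)\subset N$ for $|x|<\rho$. This is the contradiction.
\end{itemize}
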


Interior Hessian estimates for the Poisson equation
$\Delta u=1$ are classical. However, Pogorelov's singular examples~\cite{Pogorelov1978}
show that such estimates fail in general for
$\det D^2u=\sigma_3(D^2u)=1$ in dimension three.
Urbas~\cite{Urbas1990} extended this phenomenon
to higher-order Hessian equations $\sigma_k(D^2u)=f$,
$n\ge k\ge3$.

These results highlight the special role of the quadratic Hessian equation in the theory of interior Hessian estimates. Heinz~\cite{Heinz1959} established interior $C^2$
estimates for the two-dimensional Monge--Amp\`ere equation. This result was obtained by an elementary pointwise proof by Chen--Han--Ou~\cite{ChenHanOu2016} and for a proof by Liu~\cite{Liu2021} using the partial Legendre transform.
In dimension three, Warren and Yuan~\cite{WarrenYuan2009,warrenYuan2010}
established interior Hessian estimates for
$\sigma_2(D^2u)=1$ by exploiting its special geometric
structure, while Qiu~\cite{Qiu2024hessian} treated the more general
right-hand side $f(x,u,Du)$. Zhou~\cite{Zhou2024} established interior Hessian estimates
for $\sigma_2(D^2u)=f^2(x)$ with positive Lipschitz $f$,
and extended the regularity to continuous viscosity solutions. In dimension four, Shankar and Yuan~\cite{ShankarYuan2025}
established estimates for the constant right-hand side,
and Fan~\cite{Fan2026D4} extended them to $f(x,u,Du)$.

In higher dimensions, various interior Hessian estimates
for the $\sigma_2$ equation under additional convexity
assumptions were established
in~\cite{GuanQiu2019,McGonagleSongYuan2019,
ShankarYuan2020,mooney2021,ShankarYuan2025,Fan2026D4}.
In recent year, Mooney~\cite{mooney2025} obtained interior Hessian estimates
under additional $W^{2,p}$ bounds for $p>2$,
while Chen, Jian, and Zhou~\cite{chenjianz2026}
treated convex solutions with positive Lipschitz right-hand sides.
More recently, Li and Wu~\cite{LiWu2026Sigma2}
established interior Hessian estimates for
$\sigma_2(D^2u)=1$ in arbitrary dimensions
for all admissible solutions, without any additional
convexity assumptions. 
As applications of interior Hessian estimates,
regularity results for viscosity solutions were obtained
in~\cite{shankaryuan2021,mooney2021,chenjiantuzhou2026,chenzhouzhu2026,zhouzhu2026}.

Meanwhile, progress has also been made on interior Hessian
estimates for Hessian quotient equations.
Early results for $\sigma_3(D^2u)/\sigma_1(D^2u)=1$
in dimensions three and four were obtained by exploiting
the special Lagrangian structure of the equation
\cite{ChenYuanWarren2009,WangYuan2014}.
Zhou~\cite{Zhou2024} treated the Lipschitz variable right-hand side
$f^2(x)$ using a twisted Lagrangian structure.
Subsequently, building on his earlier work in dimension three
for $f=f(x)$~\cite{Lu2023},
Lu~\cite{Lu2025} established interior Hessian estimates
for convex solutions of
$\sigma_n(D^2u)/\sigma_{n-i}(D^2u)=f(x,u)$,
$i=1,2$, in arbitrary dimensions, using the Legendre
transform and an integral argument.
Fung~\cite{Fung2026} extended these estimates to right-hand sides
$f(x,u,Du)$ with $1/f$ concave in the gradient variable,
using a doubling argument.
Following the approach of Urbas~\cite{Urbas1990},
Lu~\cite[Theorem 1.2]{Lu2025} also constructed counterexamples showing that such
estimates fail in general for $\sigma_k(D^2u)/\sigma_l(D^2u)=f$
when $1\le l<k\le n$ and $k-l\ge3$, even for convex solutions. And Fung~\cite[Remark 1.4]{Fung2026} extended this failure to $1\le l<k\le n$ if $f$ depends on $Du$ using the idea in~\cite[Theorem 1.2]{GuanRenWang2015}.

Subsequent research addressed the more general Hessian quotient
equations 
$$\sigma_k(D^2u)/\sigma_l(D^2u)=f(x,u), \quad l=k-1,k-2.$$
Lu and Tsai~\cite{lutsai2026note} established interior Hessian
estimates for convex solutions under an additional concavity
inequality, while Fung~\cite{Fung2026} extended the results
to gradient-dependent right-hand sides with a similar $1/f$ concavity to $Du$ and same assumption concavity inequality.
For $k=2$, Jiao and Sui~\cite{jiaosui2026}
established interior Hessian estimates via a doubling argument,
while Lu and Sroka~\cite{lusroka2026} obtained related estimates
for constant right-hand sides by reducing the equation to
the $\sigma_2$ equation, both under additional convexity assumptions.
Ji and Liang~\cite{jiliang2026} also obtained similar estimates
for semiconvex solutions with Lipschitz right-hand sides. For $k=3$, Mai and Yan also attained the estimates for semiconvex solutions by transferring the equation to a sum type and use the concave inequality by \cite{dongxuzhang2026}. More recently, Li and Wu~\cite{LiWu2026Quotient} and Dong and Zhang~\cite{DongZhang2026}
further developed the concavity inequalities in~\cite{tsai2026,guansroka2026} and established interior Hessian estimates for these equations with arbitrary $k\leq n$, for convex and admissible semiconvex solutions, respectively.
For constant right-hand sides, Wu~\cite{Wu2026} obtained
such estimates for $k$-convex solutions under $2k>n$,
while Qiu and Yan~\cite{QiuYan2026} established them for
admissible solutions in arbitrary dimensions.

All of the aforementioned results on Hessian quotient equations
rely on Jacobi inequalities and substantial computations.
Inspired by the compactness argument of McGonagle--Song--Yuan~\cite{McGonagleSongYuan2019} and Li~\cite{Licaiyan2019}, we develop a new
compactness proof of Theorems~\ref{thm:interior-hessian}
and~\ref{thm:variable-rhs}.
Our argument combines the strong minimum
principle~\cite{BardiDaLio1999,HanLin2011} with the area
formula~\cite{Simon1983} to obtain the final contradiction.
This approach avoids the derivation of a Jacobi inequality,
while the viscosity form of the strong minimum principle
allows us to treat gradient-dependent right-hand sides
under the stated concavity assumption.

\section{Proof of Theorem \ref{thm:interior-hessian}} \label{sec:2}

\begin{proof}
By the rescaling $\bar{u}(x)=u(Rx)/R^2$, it suffices to consider
the case $R=1$. For simplicity, we write $B_r=B_r(0)$ from now on.

\subsection*{Step 1. A contradiction sequence and its limit.}

Suppose, to the contrary, that the estimate fails.
Then there exist a constant $L>0$ and a sequence of smooth
convex solutions $\{u_k\}_{k=1}^{\infty}$ of equation \eqref{eq:mainpde} such that
\begin{align}
    \|Du_k\|_{L^\infty(B_1)} \le L,
    \qquad
    |D^2u_k(0)| \longrightarrow \infty
    \quad \text{as } k\to\infty.
\end{align}

We normalize each $u_k$ by setting
\begin{align}
    \widetilde{u}_k(x)
    =u_k(x)-u_k(0)-Du_k(0)\cdot x.
    \label{eq:normalization}
\end{align}
Since $D^2\widetilde{u}_k=D^2u_k$, this normalization preserves
both the equation and convexity. Moreover,
\begin{align}
    \widetilde{u}_k(0)=0,\qquad
    D\widetilde{u}_k(0)=0,\qquad
    \|D\widetilde{u}_k\|_{L^\infty(B_1)}\le 2L.
\end{align}
For simplicity, we continue to denote the normalized sequence
by $\{u_k\}$.

For all $x,x_1,x_2\in B_1$, we have
\begin{align}
    |u_k(x)|\le 2L,
    \qquad
    |u_k(x_1)-u_k(x_2)|\le 2L|x_1-x_2|.
\end{align}
Thus each $u_k$ extends continuously to $\overline{B}_1$,
with the same bounds. The sequence is therefore uniformly
bounded and equicontinuous on $\overline{B}_1$.
By the Arzel\`a--Ascoli theorem, after passing to a subsequence,
still denoted by $\{u_k\}$, we obtain
\begin{align}
    u_k\longrightarrow u_\infty
    \qquad \text{uniformly on } \overline{B}_1.
\end{align}

\subsection*{Step 2. The Legendre transform and the transformed equation.}\label{step:2}

Set $a=\sqrt{n(n-1)/2}\ge 1$ and define
\begin{align}
v_j(x)=u_j(x)+\frac{a}{2}|x|^2,
\qquad x\in\overline{B}_1,\quad j\in\mathbb{N}\cup\{\infty\}.
\end{align}
The uniform limit $u_\infty$ is convex.
Since $a\ge 1$, each $v_j$ is $1$-strongly convex on
$\overline{B}_1$. Moreover,
\begin{align}\label{eq:v-uniform-convergence}
\|v_k-v_\infty\|_{L^\infty(\overline{B}_1)}
=
\|u_k-u_\infty\|_{L^\infty(\overline{B}_1)}
\longrightarrow 0
\qquad\text{as }k\to\infty.
\end{align}

Define the Legendre transform
\begin{align}\label{eq:legendre-transform}
w_j(y)=\max_{x\in\overline{B}_1}
\{x\cdot y-v_j(x)\},
\qquad y\in\mathbb{R}^n,
\end{align}
and let $X_j(y)\in\overline{B}_1$ denote the unique maximizer.
Existence and uniqueness follow from continuity, compactness,
and strict convexity of $v_j$.

The functions $w_j$ are continuously differentiable on
$\mathbb{R}^n$, and
\begin{align}
Dw_j(y)&=X_j(y),
\label{eq:legendre-gradient}\\
|Dw_j(y_1)-Dw_j(y_2)|&\le |y_1-y_2|,
\qquad y_1,y_2\in\mathbb{R}^n.
\label{eq:legendre-lipschitz}
\end{align}
These are standard properties of the Legendre transform;
see, for example, \cite[Theorem 18.15, Corollary 16.30]{BauschkeCombettes2017}.
Moreover,
\begin{align}\label{eq:legendre-gradient-convergence}
\sup_{y\in\mathbb{R}^n}
|Dw_k(y)-Dw_\infty(y)|^2
\le 2\|v_k-v_\infty\|_{L^\infty(\overline{B}_1)}
\longrightarrow 0.
\end{align}
For completeness, a direct proof of (\ref{eq:legendre-gradient}--\ref{eq:legendre-gradient-convergence}) is provided in
Lemma~\ref{lem:legendre-properties} in the
\hyperref[sec:Appendix]{Appendix}.

We define the subdifferential of $v_\infty$ at the origin by
\begin{align}
\partial v_\infty(0)
:=
\left\{
p\in\mathbb{R}^n:
v_\infty(x)\ge v_\infty(0)+p\cdot x
\text{ for all }x\in\overline{B}_1
\right\}.
\end{align}
Since
\begin{align}
\|Dv_k\|_{L^\infty(B_1)}
=\|Du_k+ax\|_{L^\infty(B_1)}
\le 2L+a=:M,
\end{align}
continuous extension to $\overline{B}_1$ and uniform convergence
give
\begin{align}
|v_\infty(x_1)-v_\infty(x_2)|
\le M|x_1-x_2|,
\qquad x_1,x_2\in\overline{B}_1.
\end{align}
Thus $\partial v_\infty(0)$ is a compact convex subset of
$\overline{B}_M$.
Moreover, the normalization $Du_k(0)=0$ and convexity imply
$v_k(x)\ge v_k(0)$; passing to the limit yields
$0\in\partial v_\infty(0)$.

Set
\begin{align}\label{eq:definition-N}
N:=
\left\{
y\in\mathbb{R}^n:
\operatorname{dist}(y,\partial v_\infty(0))<\frac14
\right\}.
\end{align}
Then $N\subset B_{M+1/4}$ is open and convex, hence connected,
and contains the origin.\label{properties of N}

For $y\in\partial v_\infty(0)$, the defining inequality gives
\begin{align}
x\cdot y-v_\infty(x)
\le -v_\infty(0),
\qquad x\in\overline{B}_1.
\end{align}
Hence the unique maximizer is $X_\infty(y)=0$, and therefore
$Dw_\infty(y)=0$.
The converse follows from the same inequality, so
\begin{align}\label{eq:subdifferential-zero-gradient}
y\in\partial v_\infty(0)
\quad\Longleftrightarrow\quad
Dw_\infty(y)=0.
\end{align}

For every $y_1\in N$, choose $y_2\in\partial v_\infty(0)$
such that $|y_1-y_2|<1/4$. By \eqref{eq:legendre-lipschitz} and
\eqref{eq:subdifferential-zero-gradient},
\begin{align}
|Dw_\infty(y_1)|
=
|Dw_\infty(y_1)-Dw_\infty(y_2)|
\le |y_1-y_2|<\frac14.
\end{align}
Consequently, $Dw_\infty(N)\subset B_{1/4}$.
By uniform convergence~\eqref{eq:v-uniform-convergence}, choose $k_0\in\mathbb N$ such that
\begin{align}
\|v_k-v_\infty\|_{L^\infty(\overline{B}_1)}
<\frac1{32},
\qquad k>k_0.
\end{align}
Then \eqref{eq:legendre-gradient-convergence} gives
\begin{align}
\sup_{y\in\mathbb{R}^n}|Dw_k(y)-Dw_\infty(y)|<\frac14,
\end{align}
and hence
\begin{align}\label{eq:gradient-image-interior}
Dw_k(N)\subset B_{1/2},
\qquad k>k_0.
\end{align}

Therefore, for $k>k_0$ and $y\in N$, the maximizer
$X_k(y)=Dw_k(y)$ lies in the interior of $B_1$.
The first-order optimality condition therefore gives
\begin{align}\label{eq:inverse-gradient-identity}
y=Dv_k(X_k(y))=Dv_k(Dw_k(y)),
\qquad y\in N.
\end{align}
Since $v_k\in C^\infty(B_1)$ and $D^2v_k\ge I$,
the inverse function theorem implies that $Dw_k$ locally
coincides with a smooth inverse of $Dv_k$, so $w_k\in C^\infty(N)$.
Differentiating \eqref{eq:inverse-gradient-identity} yields
\begin{align}\label{eq:inverse-hessian-identity}
I=D^2v_k(Dw_k(y))\,D^2w_k(y),
\qquad y\in N,\quad k>k_0.
\end{align}

For $k>k_0$, convexity of $u_k$ and the original equation \eqref{eq:mainpde} imply
$D^2u_k>0$. By \eqref{eq:inverse-hessian-identity},
\begin{align}\label{eq:transformed-hessian}
D^2u_k(Dw_k(y))
=[D^2w_k(y)]^{-1}-aI,
\qquad
0<D^2w_k(y)<\frac1a I.
\end{align}
Hence, suppressing the evaluation points,
\begin{align}\label{eq:inverse-shift-identity}
(I-aD^2w_k)^{-1}-I=a(D^2u_k)^{-1}.
\end{align}
Since $\sigma_{n-2}(D^2u_k)/\sigma_{n}(D^2u_k)=\sigma_2((D^2u_k)^{-1})=1$ and
$a^2=n(n-1)/2$, we obtain
\begin{align}
a^2
&=\sigma_2\bigl((I-aD^2w_k)^{-1}-I\bigr)\notag\\
&=\sigma_2\bigl((I-aD^2w_k)^{-1}\bigr)
-(n-1)\sigma_1\bigl((I-aD^2w_k)^{-1}\bigr)+a^2.
\end{align}
Since $I-aD^2w_k>0$ by \eqref{eq:transformed-hessian}, this gives
\begin{align}\label{eq:transformed-quotient}
q(I-aD^2w_k)=\frac1{n-1},
\qquad
q=\frac{\sigma_{n-1}}{\sigma_{n-2}}.
\end{align}
Define $G(A)=-q(I-aA)$. Then
\begin{align}\label{eq:transformed-equation}
G(D^2w_k)=-\frac1{n-1}
\qquad\text{in }N.
\end{align}
Let $0<\lambda_1\le\cdots\le\lambda_n$ be the eigenvalues of
$D^2u_k$. Since $\sigma_2(\lambda^{-1})=1$, we have
$\lambda_2^{-2}\le\lambda_1^{-1}\lambda_2^{-1}\le1$,
so $\lambda_i\ge1$ for $i\ge2$. Thus the eigenvalues
$\eta_i=\lambda_i/(\lambda_i+a)$ of $I-aD^2w_k$ satisfy
\begin{align}\label{eq:transformed-eigenvalue-bounds}
0<\eta_1\le1,
\qquad
\frac1{1+a}\le\eta_i\le1,
\quad i=2,\ldots,n.
\end{align}
Consequently,
\begin{align}\label{eq:lowerbound of G}
\sigma_{n-2}(I-aD^2w_k(y))
\ge (1+a)^{-(n-2)}>0,
\qquad y\in N,\quad k>k_0.
\end{align}
Moreover, $\sigma_{n-2}(I-aA)>0$ on
\begin{align*}
\mathcal D
=\{A\in\operatorname{Sym}(n):
\lambda(I-aA)\in\Gamma_{n-1}\},
\end{align*}
so $G\in C^\infty(\mathcal D)$.

Writing $\eta_i=1-a\mu_i$, where $\mu_i$ are the
corresponding eigenvalues of $D^2w_k$, we now establish
the convexity and uniform ellipticity of
\eqref{eq:transformed-equation}.

\medskip
\textbf{Convexity:} The concavity of $q$ on $\Gamma_{n-1}$
(see, for example, \cite{Trudinger1995})
implies that $G$ is convex on $\mathcal D$.

\textbf{Uniform ellipticity:}
Under the rescaling $\bar\lambda=a\eta$, equation
\eqref{eq:transformed-quotient} and the range $0<\eta_i\le1$
correspond to those in \cite{ChangYuan2010} and \cite[Step~2]{McGonagleSongYuan2019},
with $K=\delta=a^{-1}$.
The ellipticity argument there gives
\begin{align}
c(n)\le q_i(\bar\lambda)\le C(n),
\qquad i=1,\ldots,n.
\end{align}
Since $q$ is homogeneous of degree one,
$Dq(a\eta)=Dq(\eta)$, so the same bounds hold for $q_i(\eta)$.
Thus, $G(\mu)=-q(1-a\mu)$ implies $G_i=a q_i(\eta)$ and after adjusting the constants,
\begin{align}\label{eq:elliptic_bound}
c(n)I\le \bigl(G^{ij}(D^2w_k(y))\bigr)\le C(n)I,
\qquad y\in N,\quad k>k_0.
\end{align}

\subsection*{Step 3. Compactness and the limiting equation.}\label{step:3}

Since $Dw_k=X_k\in\overline{B}_1$ and $w_k(0)=0$,
the convexity of $N$ and the inclusion $0\in N$ give
\begin{align}
|w_k(y)|
&=\left|\int_0^1 Dw_k(ty)\cdot y\,dt\right|
\le |y|
\le M+\frac14,
\qquad y\in N.
\end{align}
By the Evans--Krylov--Safonov theory, there exists
$\alpha=\alpha(n)\in(0,1)$ such that, for every
$U\Subset N$,
\begin{align}
\|w_k\|_{C^{2,\alpha}(U)}
\le C\bigl(n,M,\operatorname{dist}(U,\partial N)\bigr),
\qquad k>k_0.
\end{align}
For any fixed $\beta\in(0,\alpha)$, a diagonal argument
yields a subsequence, still denoted by $\{w_k\}$, such that
\begin{align}
w_k\longrightarrow\widetilde w
\qquad\text{in }C^{2,\beta}_{\mathrm{loc}}(N).
\end{align}

The definition of the Legendre transform gives
\begin{align}
|w_k(y)-w_\infty(y)|
\le \|v_k-v_\infty\|_{L^\infty(\overline{B}_1)}
\longrightarrow0,
\qquad y\in\mathbb R^n.
\end{align}
Thus $\widetilde w=w_\infty$ in $N$, and hence
\begin{align}\label{eq:legendre-C2-convergence}
w_k\longrightarrow w_\infty
\qquad\text{in }C^{2,\beta}_{\mathrm{loc}}(N).
\end{align}

Therefore, the bounds \eqref{eq:transformed-eigenvalue-bounds}
pass to the limit, ensuring that
$D^2w_\infty(y)\in\mathcal D$ for every $y\in N$. By the \hyperref[eq:lowerbound of G]{smoothness} of $G$ on $\mathcal D$ and the local $C^{2,\beta}$ convergence,
we obtain
\begin{align}\label{eq:limiting-equation}
G(D^2w_\infty)=-\frac1{n-1},
\qquad D^2w_\infty\ge0
\qquad\text{in }N.
\end{align}

We now show that $D^2w_\infty(0)$ is singular.
By the normalization, $Dv_k(0)=0$.
Together with $Dv_k(Dw_k(0))=0$ and the injectivity of
$Dv_k$, this implies $Dw_k(0)=0$.
The inverse Hessian identity therefore yields
\begin{align}
D^2w_k(0)
=\bigl(D^2v_k(0)\bigr)^{-1}
=\bigl(D^2u_k(0)+aI\bigr)^{-1}.
\end{align}
Since $D^2u_k(0)\ge0$ and $|D^2u_k(0)|\to\infty$,
we obtain
\begin{align}
\lambda_{\min}(D^2w_k(0))
=\frac1{\lambda_{\max}(D^2u_k(0))+a}
\longrightarrow0.
\end{align}
By \eqref{eq:legendre-C2-convergence},
\begin{align}
\lambda_{\min}(D^2w_\infty(0))=0.
\end{align}
After an orthogonal change of coordinates, we may assume that
\begin{align}\label{eq:limiting-degeneracy}
D_{11}w_\infty(0)=0.
\end{align}

\subsection*{Step 4. Degeneracy of the limiting Hessian.}

The local $C^{2,\beta}$ convergence \eqref{eq:legendre-C2-convergence} and the \hyperref[eq:lowerbound of G]{smoothness} of $G$ on $\mathcal D$ allow us to pass the ellipticity bounds \eqref{eq:elliptic_bound} from \hyperref[step:2]{Step~2} to the limit:
\begin{align}
c(n)I\le \bigl(G^{ij}(D^2w_\infty)\bigr)\le C(n)I
\qquad\text{in }N.
\end{align}
By the difference quotient method and interior Schauder estimates,
followed by bootstrapping, we obtain $w_\infty\in C^\infty(N)$.

Differentiating \eqref{eq:limiting-equation} with respect to $y_1$
gives
\begin{align}
\sum_{i,j}G^{ij}(w_\infty)_{ij1}=0.
\end{align}
Differentiating once more, we obtain
\begin{align}
\sum_{i,j}G^{ij}(w_\infty)_{ij11}
+\sum_{i,j,r,s}G^{ij,rs}
(w_\infty)_{ij1}(w_\infty)_{rs1}=0,
\end{align}
where the derivatives of $G$ are evaluated at $D^2w_\infty$. By the convexity of $G$,
\begin{align}
\sum_{i,j}G^{ij}\partial_{ij}\bigl((w_\infty)_{11}\bigr)
\le0
\qquad\text{in }N.
\end{align}
Since $(w_\infty)_{11}\ge0$ in $N$ and
$(w_\infty)_{11}(0)=0$, the strong maximum principle
and the \hyperref[properties of N]{connectedness} of $N$ yield
\begin{align}
(w_\infty)_{11}\equiv0
\qquad\text{in }N.
\end{align}
As $D^2w_\infty$ is positive semidefinite, it follows that
$D^2w_\infty e_1=0$, where $e_1=(1,0,\ldots,0)^T$.
Consequently,
\begin{align}
\det D^2w_\infty=0
\qquad\text{in }N.
\end{align}

By \eqref{eq:legendre-lipschitz}, $Dw_\infty$ is
globally Lipschitz. Since every point of $Dw_\infty(N)$
has at least one preimage in $N$, the area formula (see for example \cite[(8.2)]{Simon1983}) gives
\begin{align}
|Dw_\infty(N)|
&\le \int_{\mathbb R^n}
\#\{y\in N: Dw_\infty(y)=x\}\,dx \notag\\
&= \int_N |\det D^2w_\infty(y)|\,dy
=0.
\end{align}

We claim that there exists $\rho\in(0,1)$ such that
\begin{align}\label{eq:subgradient-inclusion}
\partial v_\infty(x)\subset N
\qquad\text{for every }x\in B_\rho.
\end{align}
Otherwise, Suppose that \eqref{eq:subgradient-inclusion} fails for every
$\rho\in(0,1)$. Then, for each integer $j\ge2$, there exist
$x_j\in B_{1/j}$ so $x_j\to0$ and $p_j\in\partial v_\infty(x_j)$
such that $p_j\notin N$. 

Since $v_\infty$ is $M$-Lipschitz on $\overline B_1$,
we have $|p_j|\le M$.
After passing to a subsequence, we may assume that $p_j\to p$.
By the definition of the subgradient,
\begin{align*}
v_\infty(x)\ge v_\infty(x_j)+p_j\cdot(x-x_j),
\qquad x\in\overline B_1.
\end{align*}
Passing to the limit gives
\begin{align*}
v_\infty(x)\ge v_\infty(0)+p\cdot x,
\qquad x\in\overline B_1.
\end{align*}
Thus $p\in\partial v_\infty(0)\subset N$.
Since $N$ is open, $p_j\in N$ for all sufficiently large $j$,
a contradiction. This proves \eqref{eq:subgradient-inclusion}.

For every $x\in B_\rho$, the convexity of $v_\infty$
ensures that $\partial v_\infty(x)$ is nonempty.
Choose $p\in\partial v_\infty(x)\subset N$.
By the definition of the Legendre transform and the uniqueness
of the maximizer, we have
\begin{align}
x=X_\infty(p)=Dw_\infty(p).
\end{align}
Consequently,
\begin{align}
B_\rho\subset Dw_\infty(N),
\end{align}
contradicting $|Dw_\infty(N)|=0$.
This completes the proof.

\end{proof}

\section{Proof of Theorem~\ref{thm:variable-rhs}} \label{sec:3}

\begin{proof}

We follow the compactness argument of Section~\ref{sec:2} and retain its
notation wherever applicable. 

Steps 1--3 follow those of Section~\ref{sec:2} with the modifications
described below. As before, we reduce to $R=1$ and suppose
that there exist smooth convex solutions $u_k$ of
\begin{align}
\frac{\sigma_n(D^2u_k)}{\sigma_{n-1}(D^2u_k)}
=g_k(x,u_k,Du_k)
\qquad\text{in }B_1,
\end{align}
with uniformly bounded $C^1$ norms, such that
$|D^2u_k(0)|\to\infty$.
The functions $g_k$ satisfy the uniform bounds and structural
condition in Theorem~\ref{thm:variable-rhs}. In particular,
\begin{align}
g_k\ge m>0,
\qquad D_{pp}^2(g_k^{-1})\le0.
\end{align}
Since
\begin{align}
\sum_{i=1}^n\lambda_i(D^2u_k)^{-1}
=\frac1{g_k(x,u_k,Du_k)}
\le\frac1m,
\end{align}
we have $D^2u_k\ge mI$.

Set
\begin{align}
v_k(x)=u_k(x)-u_k(0)-Du_k(0)\cdot x.
\end{align}
The compactness and Legendre-transform arguments of Section~\ref{sec:2}
apply with strong convexity constant $m$. In particular, the gradient Lipschitz constant becomes $m^{-1}$,
and we take
\begin{align}
N=\left\{
y:\operatorname{dist}(y,\partial v_\infty(0))<\frac m4
\right\}.
\end{align}
For all sufficiently large $k>k_0$, the Legendre transforms satisfy
$Dw_k(N)\subset B_{1/2}$ and
\begin{align}\label{eq:transformed equation2}
0<D^2w_k\le m^{-1}I,
\qquad \Delta w_k=h_k(Q_k(y))
\qquad\text{in }N,
\end{align}
where $h_k=g_k^{-1}$ and
\begin{align}
Q_k(y)=\bigl(
&Dw_k(y),\,
y\cdot Dw_k(y)-w_k(y)+u_k(0)
+Du_k(0)\cdot Dw_k(y),\,
y+Du_k(0)
\bigr).
\end{align}

The chain rule, the uniform $C^1$ bound $\|f\|_{C^1(K)}$ on $g_k$,
the lower bound $g_k\ge m$, and $D^2w_k\le m^{-1}I$ give
\begin{align}
\|D(h_k\circ Q_k)\|_{L^\infty(N)}\le \frac{C(n)}{m^2}
\left(1+\frac{1+L}{m}\right)
\|f\|_{C^1(K)},
\end{align}
with a bound independent of $k$.
Since $N$ is convex and $N\subset B_{2L+\frac{m}{2}}$,
for every $\alpha\in(0,1)$,
\begin{align}
\|h_k\circ Q_k\|_{C^{0,\alpha}(N)}
&\le \frac1m
+\|D(h_k\circ Q_k)\|_{L^\infty(N)}
  (\operatorname{diam}N)^{1-\alpha}
\le C.
\end{align}
Interior Schauder estimates therefore yield, for every
$U\Subset N$,
\begin{align}
\|w_k\|_{C^{2,\alpha}(U)}
&\le C\left(
\|w_k\|_{L^\infty(N)}
+\|h_k\circ Q_k\|_{C^{0,\alpha}(N)}
\right)
\le C,
\end{align}
where the constant also depends on
$\operatorname{dist}(U,\partial N)$ but is independent of $k$.
The compactness and limit-identification argument of Section~\ref{sec:2}
then gives, for $0<\beta<\alpha$,
\begin{align}\label{eq:limit-identification}
w_k\longrightarrow w_\infty
\qquad\text{in }C^{2,\beta}_{\mathrm{loc}}(N).
\end{align}
Moreover,
\begin{align}
D^2w_\infty\ge0,
\qquad \lambda_{\min}(D^2w_\infty(0))=0.
\end{align}

Step 4 requires a new argument, since the limiting right-hand side
of \eqref{eq:transformed equation2} may not be twice differentiable,
so we cannot directly differentiate the limiting equation twice. Inspired by Han-lin's\cite[Section 5.2]{HanLin2011}, we now establish the propagation of degeneracy in another way. Let
\begin{align}
\lambda_k(y)=\lambda_{\min}(D^2w_k(y)).
\end{align}
We claim that, with $C_1,C_2>0$ independent of $k$,
\begin{align}\label{eq:min-eigenvalue-inequality}
-\Delta\lambda_k+C_1|D\lambda_k|+C_2\lambda_k\ge0
\qquad\text{in }N
\end{align}
in the viscosity sense.

Fix $y_0\in N$ and a unit eigenvector $e$ corresponding to
$\lambda_k(y_0)$, and keep $e$ constant when differentiating.
Write $\lambda=\lambda_k(y_0)$ and $p=y_0+Du_k(0)$.
At $y_0$,
\begin{align}
\partial_eQ_k
&=(\lambda e,\lambda p\cdot e,e),\\
\partial_{ee}Q_k
&=\bigl(
D(w_k)_{ee},\,
\lambda+p\cdot D(w_k)_{ee},\,
0
\bigr).
\end{align}
Differentiating the transformed equation \eqref{eq:transformed equation2} twice gives
\begin{align}
\Delta(w_k)_{ee}
={}&\bigl(D_xh_k+(h_k)_z p\bigr)\cdot D(w_k)_{ee}
+(h_k)_z\lambda \notag\\
&+D^2h_k[\partial_eQ_k,\partial_eQ_k],
\end{align}
with the derivatives of $h_k$ evaluated at $Q_k(y_0)$.
Since $D_{pp}^2h_k\le0$, the uniform bounds and
$0\le\lambda\le m^{-1}$ imply
\begin{align}
D^2h_k[\partial_eQ_k,\partial_eQ_k]\le C\lambda.
\end{align}
Consequently,
\begin{align}
\Delta(w_k)_{ee}(y_0)
\le C_1|D(w_k)_{ee}(y_0)|+C_2\lambda_k(y_0).
\end{align}

Let $\varphi$ be any $C^2$ function defined near $y_0$
that touches $\lambda_k$ from below at $y_0$. Then
\begin{align}
\varphi\le\lambda_k\le(w_k)_{ee}
\end{align}
near $y_0$, with equality at $y_0$.
Thus the preceding inequality gives
\begin{align}
\Delta\varphi(y_0)
\le C_1|D\varphi(y_0)|+C_2\lambda_k(y_0),
\end{align}
which proves \eqref{eq:min-eigenvalue-inequality} in the viscosity sense.

The local $C^{2,\beta}$ convergence \eqref{eq:limit-identification} implies that
\begin{align}
\lambda_k\longrightarrow
\lambda_\infty:=\lambda_{\min}(D^2w_\infty)
\qquad\text{locally uniformly in }N.
\end{align}
By stability of viscosity supersolutions
\cite[Remark~6.2]{CrandallIshiiLions1992},
\begin{align}
H[\lambda_\infty]
:=-\Delta\lambda_\infty+C_1|D\lambda_\infty|
+C_2\lambda_\infty\ge0
\qquad\text{in }N.
\end{align}

We apply \cite[Corollary~1]{BardiDaLio1999} to
$-\lambda_\infty$ with
$F(r,p,A):=-H(-r,-p,-A)=-\operatorname{tr}A-C_1|p|+C_2r$.
The operator $F$ is continuous and proper, so $(H0)$ there holds.
Moreover, $F(\xi r,\xi p,\xi A)=\xi F(r,p,A)$ for $\xi>0$,
which verifies $(H1)$ with $\varphi(\xi)=\xi$.
For every $p\ne0$,
\begin{align}
F(0,p,I-\gamma p\otimes p)
=-n+\gamma|p|^2-C_1|p|>0
\qquad\text{for }\gamma>\frac{n+C_1|p|}{|p|^2}.
\end{align}
Thus $(H2)$ holds with $\rho_0=1$.
Since $-\lambda_\infty$ is a continuous viscosity subsolution
of $F=0$ in the connected open set $N$ and attains its
nonnegative maximum $0$ at the origin, Corollary~1 there yields
$\lambda_\infty\equiv0$ in $N$.
Hence $\det D^2w_\infty\equiv0$ in $N$, and the area formula
and subgradient arguments of Section~\ref{sec:2}
give the same contradiction.

\end{proof}

\section{Appendix}\label{sec:Appendix}

We retain the notation $v_j$, $w_j$, and $X_j$ introduced in \hyperref[step:2]{Step~2} of Section~\ref{sec:2} and establish some standard properties of the Legendre transform
in our setting.
\begin{lemma}\label{lem:legendre-properties}
For every $j\in\mathbb{N}\cup\{\infty\}$, the function $w_j$
is continuously differentiable on $\mathbb{R}^n$, with
$Dw_j=X_j$ and
\begin{align}\label{eq:legendre-lipschitz2}
|Dw_j(y_1)-Dw_j(y_2)|\le |y_1-y_2|,
\qquad y_1,y_2\in\mathbb{R}^n.
\end{align}
Moreover, as $k \to \infty$,
\begin{align}\label{eq:legendre-gradient-convergence2}
\sup_{y\in\mathbb{R}^n}
|Dw_k(y)-Dw_\infty(y)|^2
\le 2\|v_k-v_\infty\|_{L^\infty(\overline{B}_1)}
\longrightarrow 0.
\end{align}
\end{lemma}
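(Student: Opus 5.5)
The plan is to prove everything from a single quadratic-growth inequality that follows from the $1$-strong convexity of $v_j$ on the convex set $\overline{B}_1$ (the same argument gives constant $m$ in Section~\ref{sec:3}). Fix $j$ and $y$, and write $\Phi_y(x)=x\cdot y-v_j(x)$. Then $\Phi_y$ is $1$-strongly concave on $\overline{B}_1$ and has the unique maximizer $X_j(y)$. The first step is to show
\begin{align*}
\Phi_y(X_j(y))-\Phi_y(x)\ge \tfrac12|x-X_j(y)|^2,\qquad x\in\overline{B}_1 .
\end{align*}
To see this, note that $\Phi_y+\frac12|x|^2$ is concave. Compare $\Phi_y$ at $x$ and at $X_j(y)+t(x-X_j(y))$, use maximality at $X_j(y)$, divide by $t$, and let $t\to0$. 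This works without differentiability of $v_\infty$, because only concavity along segments is used.

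\textbf{Differentiability and the gradient formula.} For any $y,y'$, testing the maximum defining $w_j(y')$ at the point $X_j(y)$ gives $w_j(y')\ge w_j(y)+X_j(y)\cdot(y'-y)$. Symmetrically, $w_j(y')\le w_j(y)+X_j(y')\cdot(y'-y)$. These combine to
\begin{align*}
0\le w_j(y')-w_j(y)-X_j(y)\cdot(y'-y)\le (X_j(y')-X_j(y))\cdot(y'-y).
\end{align*}
Once $X_j$ is shown to be continuous, indeed Lipschitz, the right side is $o(|y'-y|)$. This yields $Dw_j=X_j$ and $w_j\in C^1$.

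\textbf{Lipschitz bound.} Apply the growth inequality for $\Phi_{y_1}$ at $x=X_2:=X_j(y_2)$ and for $\Phi_{y_2}$ at $x=X_1:=X_j(y_1)$. Adding the two, the $v_j$ terms cancel and we obtain
\begin{align*}
(X_1-X_2)\cdot(y_1-y_2)\ge |X_1-X_2|^2 .
\end{align*}
Cauchy--Schwarz then gives $|X_1-X_2|\le|y_1-y_2|$, which is \eqref{eq:legendre-lipschitz2}. This bound also justifies the $o(|y'-y|)$ step above.

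\textbf{Convergence.} Fix $y$ and set $\varepsilon=\|v_k-v_\infty\|_{L^\infty(\overline{B}_1)}$. Write $\Phi^k_y,\Phi^\infty_y$ for the two functions, and let $X_k=X_k(y)$, $X_\infty=X_\infty(y)$. The growth inequality for $\Phi^\infty_y$ at $x=X_k$ gives
\begin{align*}
\tfrac12|X_k-X_\infty|^2\le \Phi^\infty_y(X_\infty)-\Phi^\infty_y(X_k).
\end{align*}
Since $|\Phi^k_y-\Phi^\infty_y|\le\varepsilon$ pointwise, we have $\Phi^\infty_y(X_\infty)\le \Phi^k_y(X_\infty)+\varepsilon\le \Phi^k_y(X_k)+\varepsilon$, and $\Phi^\infty_y(X_k)\ge \Phi^k_y(X_k)-\varepsilon$. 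Hence the right side above is at most $2\varepsilon$, so $|X_k-X_\infty|^2\le4\varepsilon$.

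This gives the constant $4$ rather than the stated $2$. To reach $2$, I would also apply the growth inequality to $\Phi^k_y$ at $x=X_\infty$ and add the two inequalities. This gives $|X_k-X_\infty|^2\le(\Phi^\infty_y-\Phi^k_y)(X_\infty)-(\Phi^\infty_y-\Phi^k_y)(X_k)\le 2\varepsilon$. Taking the supremum over $y$ and using $Dw_j=X_j$ yields \eqref{eq:legendre-gradient-convergence2}.

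\textbf{Main obstacle.} The only delicate point is establishing the growth inequality for the nonsmooth limit $v_\infty$, where only convexity is available. I would handle it with the segment argument described above, using that $v_\infty-\frac12|x|^2$ is convex as a uniform limit of convex functions.
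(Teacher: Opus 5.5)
Your proposal is correct and is essentially the paper's proof. The same segment argument gives the strong support inequality $v_j(x)\ge v_j(X_j(y))+y\cdot(x-X_j(y))+\frac12|x-X_j(y)|^2$, which is added at the two maximizers to get both the Lipschitz bound and the convergence estimate with constant $2$, while differentiability comes from the same sandwich $0\le w_j(y')-w_j(y)-X_j(y)\cdot(y'-y)\le|y'-y|^2$; your constant-$4$ detour is valid but unnecessary, since the paper goes straight to the symmetric addition.
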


\begin{proof}
Fix $j\in\mathbb{N}\cup\{\infty\}$ and $y_1\in\mathbb{R}^n$,
and set $x_1=X_j(y_1)$.
By the minimizing property of $x_1$ and the $1$-strong
convexity of $v_j$, for any $x_2\in\overline{B}_1$ and $0<t<1$,
\begin{align*}
v_j(x_1)-y_1\cdot x_1
&\le v_j((1-t)x_1+tx_2)
-y_1\cdot((1-t)x_1+tx_2)\\
&\le (1-t)(v_j(x_1)-y_1\cdot x_1)
+t(v_j(x_2)-y_1\cdot x_2)
-\frac12t(1-t)|x_1-x_2|^2.
\end{align*}
Rearranging and letting $t\to0$ gives
\begin{align}\label{eq:legendre-strong-support}
v_j(x_2)\ge v_j(x_1)+y_1\cdot(x_2-x_1)
+\frac12|x_1-x_2|^2.
\end{align}
Now take $y_2\in\mathbb{R}^n$ and set $x_2=X_j(y_2)$.
Applying \eqref{eq:legendre-strong-support} and then
interchanging the indices $1$ and $2$, we obtain
\begin{align*}
v_j(x_1)&\ge v_j(x_2)+y_2\cdot(x_1-x_2)
+\frac12|x_1-x_2|^2.
\end{align*}
Adding these inequalities yields
\begin{align*}
|x_1-x_2|^2\le (y_1-y_2)\cdot(x_1-x_2).
\end{align*}
Hence $|X_j(y_1)-X_j(y_2)|\le |y_1-y_2|$.

By the maximizing properties of $x_1$ and $x_2$ and the definition of Legendre transform,
\begin{align*}
x_1\cdot(y_2-y_1)
\le w_j(y_2)-w_j(y_1)
\le x_2\cdot(y_2-y_1).
\end{align*}
Consequently,
\begin{align*}
0
&\le w_j(y_2)-w_j(y_1)-X_j(y_1)\cdot(y_2-y_1)\\
&\le (X_j(y_2)-X_j(y_1))\cdot(y_2-y_1)
\le |y_2-y_1|^2.
\end{align*}
Letting $y_2\to y_1$ proves $Dw_j(y_1)=X_j(y_1)$.
Since $X_j$ is Lipschitz, $Dw_j$ is continuous, and
\eqref{eq:legendre-lipschitz2} follows.

Finally, fix $y\in\mathbb{R}^n$ and set
$x_1=X_k(y)$ and $x_2=X_\infty(y)$.
Applying \eqref{eq:legendre-strong-support} to $v_k$
and $v_\infty$, respectively, gives
\begin{align*}
v_k(x_2)&\ge v_k(x_1)+y\cdot(x_2-x_1)
+\frac12|x_1-x_2|^2,\\
v_\infty(x_1)&\ge v_\infty(x_2)+y\cdot(x_1-x_2)
+\frac12|x_1-x_2|^2.
\end{align*}
Adding these inequalities, we obtain
\begin{align*}
|x_1-x_2|^2
&\le (v_k-v_\infty)(x_2)
-(v_k-v_\infty)(x_1)\\
&\le 2\|v_k-v_\infty\|_{L^\infty(\overline{B}_1)}.
\end{align*}
Taking the supremum over $y\in\mathbb{R}^n$ and using
the uniform convergence $v_k\to v_\infty$ proves
\eqref{eq:legendre-gradient-convergence2}.

\end{proof}

\section*{Acknowledgements}
This work forms part of the author's ongoing doctoral research.
The author would like to thank Professor Siyuan Lu for helpful
discussions and for drawing his attention to related work.
He is also deeply grateful to Professor Yu Yuan for valuable
discussions and for sharing his unpublished notes with him.

\section*{Use of artificial intelligence}

The author used GPT-6 Astra to assist with
language editing, bibliographic search, and
reviewing and discussing mathematical proofs.
All AI-assisted suggestions were critically reviewed
by the author, who takes full responsibility for
the mathematical correctness and content of this paper.

\bibliography{main}
\bibliographystyle{amsplain}
\end{document}